\tikzset{
    o/.style={
        shorten >=#1,
        decoration={
            markings,
            mark={
                at position 1
                with {
                    \draw circle [radius=#1];
                }
            }
        },
        postaction=decorate
    },
    o/.default=3pt
}
\tikzset{SmallNode/.style={draw,inner sep=0pt},
every edge quotes/.style={fill=white}
}
\DeclareMathAlphabet\mathbfcal{OMS}{cmsy}{b}{n}
\DeclareMathOperator{\Ext}{Ext} 
\let\epsilon\varepsilon
\newcommand{\N}{\mathbb{N}} 
\newcommand{\Z}{\mathbb{Z}} 
\newcommand{\R}{\mathbb{R}} 
\newcommand{\C}{\mathbb{C}} 
\newcommand{\cK}{\mathcal{K}} 
\newcommand{\cL}{\mathcal{L}} 
\newcommand{\cI}{\mathcal{I}} 
\newcommand{\cZ}{\mathcal{Z}} 
\newcommand{\ZK}{{\cZ}_{\cK}} 
\newcommand{\D}{\mathbb{D}} 
\newcommand{\biggast}{\mathlarger{\mathlarger{\mathlarger{\bigast}}}} 
\newtheorem{theorem}{Theorem}[section]
\newtheorem*{theorem*}{Theorem}
\newtheorem{proposition}[theorem]{Proposition}
 \newtheorem{corollary}[theorem]{Corollary}
\theoremstyle{definition}
\newtheorem{definition}[theorem]{Definition}
\newtheorem{example}[theorem]{Example}
\newtheoremstyle{break}
  {\topsep}{\topsep}%
  {\upshape}{}%
  {\bfseries}{}%
\theoremstyle{break}
\newtheoremstyle{defbreak}%
  {\topsep}{\topsep}%
  {\upshape}{}%
  {\bfseries}{}%
\theoremstyle{defbreak}
\newtheoremstyle{examplebreak}%
  {\topsep}{\topsep}%
  {\upshape}{}%
  {\bfseries}{}%
\theoremstyle{examplebreak}
\newtheoremstyle{examplesbreak}%
  {\topsep}{\topsep}%
  {\upshape}{}%
  {\bfseries}{}%
\theoremstyle{examplesbreak}
\newtheoremstyle{remarkbreak}%
  {\topsep}{\topsep}%
  {\upshape}{}%
  {\bfseries}{}%
\theoremstyle{remarkbreak}
\newtheoremstyle{remarksbreak}%
  {\topsep}{\topsep}%
  {\upshape}{}%
  {\bfseries}{}%
\theoremstyle{remarksbreak}
\newtheoremstyle{lemmabreak}%
  {\topsep}{\topsep}%
  {\upshape}{}%
  {\bfseries}{}%
\theoremstyle{lemmabreak}
\newtheoremstyle{propbreak}%
  {\topsep}{\topsep}%
  {\upshape}{}%
  {\bfseries}{}%
\theoremstyle{propbreak}
\newtheoremstyle{corrbreak}%
  {\topsep}{\topsep}%
  {\upshape}{}%
  {\bfseries}{}%
\theoremstyle{corrbreak}
\newtheoremstyle{thmbreak}%
  {\topsep}{\topsep}%
  {\upshape}{}%
  {\bfseries}{}%
\theoremstyle{thmbreak}
\newtheoremstyle{conjbreak}%
  {\topsep}{\topsep}%
  {\upshape}{}%
  {\bfseries}{}%
\theoremstyle{conjbreak}
\newtheoremstyle{constrbreak}%
  {\topsep}{\topsep}%
  {\upshape}{}%
  {\bfseries}{}%
\theoremstyle{constrbreak}
\numberwithin{equation}{section}
\title{Duality in Toric Topology}
\author{Jelena Grbi\'c}
\author{Matthew Staniforth}
\address{School of Mathematical Sciences, University of Southampton, Southampton, SO171BJ, United Kingdom \\ \textit{Email address}: \href{mailto:j.grbic@soton.ac.uk}{j.grbic@soton.ac.uk}}
\address{School of Mathematical Sciences, University of Southampton, Southampton, SO171BJ, United Kingdom \\ \textit{Email address}: \href{mailto:j.grbic@soton.ac.uk}{m.staniforth@soton.ac.uk}}
\subjclass[2020]{Primary  57P10, 16E65, Secondary 57Q10, 13F55, 05E45}
\keywords{Poincar\'e duality, Gorenstien duality, combinatorial Alexander duality, moment-angle complexes, Stanley-Reisner rings, simplicial complexes, polyhedral products, polyhedral join products}
\begin{document}

\maketitle

\begin{abstract}
We characterise integral Poincar\'e duality moment-angle complexes $\ZK$ in combinatorial terms of the Alexander duality of the simplicial complex $\cK$, and consequently in algebraic terms of the Gorenstein duality of the Stanley-Reisner ring $\Z[\cK]$.  We extend Poincar\' e duality results to certain polyhedral products using  polyhedral join products of simplicial complexes.

\end{abstract}

\section{Introduction}
The polyhedral product $(\mathbf{X},\mathbf{A})^{\cK}$ of topological pairs $(X_i,A_i)$ is a subspace of the cartesian product $\prod X_i$ which is specified by the face category of a simplicial complex $\cK$. The homotopy theory of polyhedral product spaces is a rapidly evolving area of algebraic topology, and the tools of homotopy theory can often be enhanced using both algebraic and combinatorial techniques when being brought to bear on the study of polyhedral products.

A polyhedral product of particular interest in Toric Topology is the moment-angle complex, where $(X_i,A_i) = (D^2,S^1)$, which comes readily equipped with the action of a torus. A study of moment-angle complexes and related polyhedral products not only allows us to gain insight into these spaces themselves, but also provides us with a framework within which we can investigate an interplay of homotopy theoretic, algebraic and combinatorial phenomena. In this paper, we investigate the interaction of duality phenomena in these areas. 

An integral Poincar\'e duality space $X$ is one whose action of its integral cohomology algebra on its integral homology satisfies Poincar\'e duality, that is, there exists $n \in \N$ and $[\mu] \in H_n(X)$ such that the cap product
\[
[\mu] \frown (-)\colon H^l(X) \longrightarrow H_{n-l}(X) 
\] 
is an isomorphism for all $l$. Any manifold satisfies Poincar\'e duality, but not every Poincar\'e duality space is a manifold. For example, an integral homology $n$-manifold, a space with the same integral local homology groups as $\R^n$, satisfies Poincar\'e duality. We characterise Poincar\'e duality moment-angle complexes $\ZK$ in terms of a duality of the underlying simplicial complex $\cK$.

A generalised homology $n$-sphere ($GHS^n$) is a homology $n$-manifold with the homology of $S^n$. In 2015, Fan and Wang \cite{fan2016cohomology} characterised the simplicial complexes $\cK$ for which the geometric realisation $|\cK|$ is a $GHS^n$ in terms of a duality condition on the homology and cohomology groups of full subcomplexes of $\cK$, which we refer to as combinatorial Alexander duality. In Theorem~\ref{thm:ZKisPDiffKGHS}, we show that the moment-angle complex $\ZK$ being a Poincar\'e duality space is equivalent to the condition that $\cK$ exhibits combinatorial Alexander duality. As a corollary of our result and Cai's~\cite{cai2015products} classification of moment-angle manifolds,  we obtain that there are no Poincar\'e duality moment-angle complexes which are not manifolds.

Duality phenomena are ubiquitous across mathematics, and are not limited to topology and combinatorics.  A duality which appears in commutative algebra is Gorenstein duality; a property for a $d$-dimensional Noetherian ring $R$ that is measured by the functor $\Ext^{d-t}(-,R)$. The Avramov-Golod Theorem \cite[Theorem~3.4.5]{bruns_herzog_1998}, equivocates Gorenstein duality of the Stanley-Reisner ring $\mathbbm{k}[\cK]$, where $\mathbbm{k}$ is a field, with Poincar\'e of its Tor-algebra.
The framework of Toric Topology allows us to investigate an interplay of Gorenstein duality in the integral Stanley-Reisner ring $\Z[\cK]$ with topological and combinatorial dualities, in the simplicial complex $\cK$ and the moment-angle complex $\ZK$ respectively. Paraphrasing Stanley's result~\cite{alma991027115349703276} on Gorenstein Stanley-Reisner rings $\Z[\cK]$, in Theorem~\ref{thm:ZKPDiffKFWiffZ[K]Gor} we show that Poincar\'e duality in $\ZK$ is equivalent to Gorenstein duality in $\Z[\cK]$, realising an interplay of algebraic, combinatorial and topological dualities. The complements the result proven by Buchstaber and Panov \cite[Theorem~4.6.8]{buchstaber2014toric} in the case of coefficients over a field.

As a cartesian product of simplicial complexes is not a simplicial complex, a polyhedral product $(\mathbfcal{K}_{\mathbf{i}},\mathbfcal{L}_{\mathbf{i}})^{\cK}$ of simplicial pairs $(\cK_i,\cL_i)$ is not a simplicial complex. A related notion to the polyhedral product exists, where a simplical complex, known as the polyhedral join product~\cite{Vidaurre}, is  constructed as a union of join products of simplicial complexes. The special cases known as substitution complexes and composition complexes were studied by Abramyan-Panov \cite{MR4017598}  and Ayzenberg \cite{anton2013composition}, respectively. In Theorem~\ref{thm:PDFanWangGorensteinrelationshipfullgenerality}, using the polyhedral join product, we specify a family
of polyhedral products which satisfy Poincar\' e duality.

\goodbreak

\section{Duality of $\ZK$ and $\Z[\cK]$}
\label{sec:dualityinKZKandZ[K]}

\subsection{Preliminaries: The cohomology of $\ZK$}
\label{subsec:prelimcohomofZK}

For a positive integer $m$, a simplicial complex on the vertex set $[m] = \{1,...,m\}$ is a subset of $2^{[m]}$ which is closed under taking subsets, and contains the empty set. We allow a simplicial complex $\cK$ to contain ghost vertices; that is, we allow that there might exist $i \in [m]$ such that $\{i\} \notin \cK$. 

\begin{definition}

Let $\cK$ be a simplicial complex on vertex set $[m]$, and denote by $\mathbf{(X,A)}=\{(X_i,A_i)\}_{i=1}^m$ an $m$-tuple of $CW$-pairs. The \textit{polyhedral product} is defined as

\[
\begin{aligned}
\mathbf{(X,A)}^{\cK}  = \bigcup_{\sigma \in \cK} \mathbf{(X,A)}^\sigma \subseteq \prod_{i=1}^m X_i, \text{ where }
\mathbf{(X,A)}^\sigma   = \prod_{i=1}^m Y_i, \quad  Y_i = \begin{cases} X_i & \text{for }i \in \sigma \\ A_i & \text{for }i \notin \sigma. \end{cases}
\end{aligned}
\]

\end{definition}

If $(X_i,A_i) = (X,A)$ for all $i$, we denote the polyhedral product by $(X,A)^{\cK}$. When $(X_i,A_i) = (D^2,S^1)$ for all $i$, the polyhedral product is denoted by $\ZK$, and referred to as the moment-angle complex on $\cK$.

 We begin with a description of the integral cellular cochain complex $C^*(\ZK;\Z)$ due to Panov and Buchstaber \cite[Section~4.4]{buchstaber2014toric}.

Let $\D^m$ denote the $m$-dimensional unit ball in $\C^m$. The disk $\D^1$ admits a decomposition into $3$ cells: the basepoint $*$, the boundary circle $S$, and the $2$-cell $D$.

Taking products, we obtain a cellular decomposition of $\D^m$. A cell $e$ of $\D^m$ is a product of cells of $\D$ of the form $\prod_{i=1}^m Y_i$,
where for each $i$, $Y_i$ is either the basepoint $*$, the $1$-cell $S$ or the $2$-cell $D$. This can be phrased in terms of subsets of $[m]$. Each cell of $\D^m$ corresponds to exactly one pair $(J,I)$ of subsets $J,I \subseteq [m]$, with $J \cap I = \emptyset$, and this pair characterises unique a cell of $\D^m$. The subset $J$ corresponds to the $1$-cells, $I$ corresponds to the $2$-cells, and $[m] \backslash (J \cup I)$ corresponds to the $0$-cells. We denote the cell corresponding to the pair $(J,I)$ by $\kappa(J,I)$. The dimension of such a cell is $$\dim{\kappa(J,I)}=|J| + 2|I|.$$

This $CW$-structure on $\D^m$ induces a sub $CW$-structure on $\ZK \subseteq \D^m$. For $J \subseteq [m]$, we denote by $\cK_J = \{\sigma \in \cK \; | \; \sigma \subseteq J \}$ the full subcomplex of $\cK$ on $J$. Then, $\kappa(J \backslash \sigma,\sigma) \in \ZK$ if and only if $\sigma \in \cK_J$. 

 We denote by $C_*(\cK)$ and $C_*(\ZK)$ the simplicial and cellular chain complexes of $\cK$ and $\ZK$, respectively. Here and throughout, coefficients are taken to be in $\Z$, and we observe the convention that $C_{-1}(\cK) \cong C^{-1}(\cK) \cong \langle \emptyset_* \rangle = \Z$. There is the isomorphism of graded modules

\[
\begin{aligned}
h:\bigoplus_{J \subseteq [m]} C_*(\cK_J) &\xlongrightarrow{\cong} C_*(\ZK), \quad \sigma \mapsto \kappa(J \backslash \sigma,\sigma)
\end{aligned}
\]
where the grading on the left hand side is given by $\deg{\sigma} = 2|\sigma| + |J|$ for $\sigma \in \cK_J$. 

By theorems of Hochster, Baskakov, Panov and Buchstaber, the map $h$ induces the isomorphism of cohomology rings \cite[Theorem~4.5.7]{buchstaber2014toric}
\begin{equation} \label{eq:hochster}
h^*:H^*(\ZK) \cong \bigoplus_{J \subseteq [m]} \tilde{H}^*(\cK_J)
\end{equation}
where the ring structure on the right hand side is induced by the cochain-level Baskakov product
\[
C^{p-1}(\cK_I) \otimes C^{q-1}(\cK_J) \rightarrow C^{p+q-1}(\cK_{I \cup J}), \quad \sigma^* \otimes \tau^* \mapsto \begin{cases} (\sigma \cup \tau)^* & \text{if } I \cap J = \emptyset \\
0 & \text{otherwise}\\
\end{cases}
\]
where $(\sigma \cup \tau)^*$ is zero if $\sigma \cup \tau \notin \cK_{I \cup J}$, and otherwise denotes the cochain dual to $(\sigma \cup \tau) \in C_*(\cK_{I \cup J})$.
\goodbreak

\subsection{Poincar\'e duality of $\ZK$}
\label{subsec:poincaredualityinZK}

For $d \in \N$, a $CW$-complex $X$ is an $n$-\textit{Poincar\'e duality space} if there exists a class $[\mu] \in H_n(X)$ such that  the cap product
$$[\mu] \frown (-)\colon H^l(X) \rightarrow H_{n-l}(X)$$
is an isomorphism for all $l$. Here, $n$ is referred to as the Poincar\'e duality-dimension of $X$, and $[\mu]$ is referred to as the fundamental class. 

The characterisation of the structure of the cellular chains and cochains of $\ZK$ in terms of the simplicial chains of full sub-complexes of $\cK$ allows us to reframe statements about Poincar\'e duality of $\ZK$ as statements about duality of the simplicial chains and cochains of full subcomplexes of $\cK$. 

We start with a description of the cap product in $\ZK$, on the cellular level, in terms of the combinatorics of the simplicial complex $\cK$. We then exploit this combinatorial description to obtain a characterisation of Poincar\'e duality of $\ZK$ in terms of $\cK$, and also in terms of the Stanley-Reisner ring $\Z[\cK]$. 

\goodbreak

\begin{proposition}
\label{prop:capprodinZK}

Let $\kappa(J \backslash \sigma,\sigma) \in C_*(\ZK)$ and $\kappa(\widehat{J} \backslash \widehat{\sigma},\widehat{\sigma})^* \in C^*(\ZK)$, corresponding to simplices $\sigma \in \cK_J$ and $\widehat{\sigma} \in \cK_{\widehat{J}}$, respectively. Then the cap product is given by

\[
\kappa(J \backslash \sigma,\sigma) \frown \kappa(\widehat{J} \backslash \widehat{\sigma},\widehat{\sigma})^* = \begin{cases}

0 & \widehat{J} \nsubseteq J \\
0 & \widehat{\sigma} \nsubseteq \sigma \\
\kappa \left( (J \backslash \sigma) \backslash (\widehat{J} \backslash \widehat{\sigma}), \sigma \backslash \widehat{\sigma} \right) & \text{otherwise.}\\
\end{cases}
\]

\end{proposition}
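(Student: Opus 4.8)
The plan is to compute the cap product one coordinate at a time, exploiting the product cell structure $\D^m = (\D^1)^m$ together with the fact that $\ZK$ is a cellular subcomplex of $\D^m$. The cap product is induced by a cellular diagonal approximation $\Delta_\#\colon C_*(\D^m) \to C_*(\D^m) \otimes C_*(\D^m)$, and by the Eilenberg--Zilber theorem I may take $\Delta_\#$ to be the shuffle of the single-factor diagonals. Consequently the cap product on $\D^m$ is the graded tensor product of the cap products on the factors,
\[
(c_1 \otimes \cdots \otimes c_m) \frown (\phi_1 \otimes \cdots \otimes \phi_m) = \pm\,(c_1 \frown \phi_1) \otimes \cdots \otimes (c_m \frown \phi_m),
\]
up to a Koszul sign. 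So the first step is to pin down the cap product on a single $\D^1$, after which the proposition becomes a bookkeeping statement about which coordinates survive.

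On $\D^1$ the cells $*, S, D$ satisfy $\partial S = 0$ and $\partial D = S$. Requiring $\Delta_\#$ to be a chain map leaves one free parameter in $\Delta_\# D$, and normalising so that the induced cochain product matches the Baskakov product of~\eqref{eq:hochster} (under which two cochains on overlapping index sets multiply to zero) forces $\Delta_\# * = * \otimes *$, $\Delta_\# S = * \otimes S + S \otimes *$ and $\Delta_\# D = * \otimes D + D \otimes *$. Reading off the pairings of $*, S, D$ against the dual cochains $*^*, S^*, D^*$, the only nonzero cap products are the unit relations $x \frown *^* = x$ together with $S \frown S^* = *$ and $D \frown D^* = *$; in particular $D \frown S^* = 0$, $* \frown S^* = 0$, $* \frown D^* = 0$ and $S \frown D^* = 0$.

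I then translate the two cells of the statement into product cells: $\kappa(J \backslash \sigma, \sigma)$ carries a $2$-cell $D$ in each coordinate of $\sigma$, a $1$-cell $S$ in each coordinate of $J \backslash \sigma$ and the basepoint elsewhere, and likewise for the cochain $\kappa(\widehat{J} \backslash \widehat{\sigma}, \widehat{\sigma})^*$. The factorwise formula vanishes as soon as one coordinate vanishes, and by the table a coordinate $i$ kills the product exactly when its (cell, cochain) pair is $(*, S^*)$, $(*, D^*)$, $(S, D^*)$ or $(D, S^*)$. The first three patterns occur at some coordinate precisely when there is an index in $\widehat{J} \backslash J$ or in $\widehat{\sigma} \backslash \sigma$, which accounts for the two vanishing clauses $\widehat{J} \nsubseteq J$ and $\widehat{\sigma} \nsubseteq \sigma$. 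In the complementary case $\widehat{J} \subseteq J$ and $\widehat{\sigma} \subseteq \sigma$, each surviving coordinate contributes a $2$-cell on $\sigma \backslash \widehat{J}$, a $1$-cell on $(J \backslash \sigma) \backslash \widehat{J}$, and the basepoint elsewhere; using $\widehat{\sigma} \subseteq \sigma$ the $1$-cell locus rewrites as $(J \backslash \sigma) \backslash (\widehat{J} \backslash \widehat{\sigma})$ and the $2$-cell locus as $\sigma \backslash \widehat{\sigma}$, giving the stated cell.

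The delicate point, and the main obstacle, is the fourth vanishing pattern $(D, S^*)$, which lives exactly on the coordinates of $\sigma \cap (\widehat{J} \backslash \widehat{\sigma})$: these are what control the homological degree of the output and what make the rewriting $\sigma \backslash \widehat{J} = \sigma \backslash \widehat{\sigma}$ of the $2$-cell locus legitimate. I would therefore (i) carry the Koszul signs through the Eilenberg--Zilber shuffle to confirm the surviving coefficient is a unit, and (ii) cross-check the entire computation against the cup product of~\eqref{eq:hochster} via the adjunction $\langle \psi, a \frown \phi \rangle = \pm \langle \phi \smile \psi, a \rangle$. Evaluating the right-hand side on basis cochains $\psi = \kappa(J' \backslash \sigma', \sigma')^*$, the Baskakov product is nonzero only for $J' = J \backslash \widehat{J}$ and $\sigma' = \sigma \backslash \widehat{\sigma}$ with disjoint index sets; this simultaneously fixes the normalisation $\Delta_\# D = * \otimes D + D \otimes *$ used above, identifies $\pm\kappa(J' \backslash \sigma', \sigma')$ with the stated cell, and confirms that no spurious contributions survive outside the two displayed vanishing clauses.
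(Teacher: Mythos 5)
Your primary route is genuinely different from the paper's. The paper proves the proposition entirely through the adjunction $\langle \alpha, \phi \smile \psi\rangle = \langle \alpha \frown \phi, \psi\rangle$: it expands the cap product in the cell basis and reads off the coefficient of each $\kappa(L\backslash\tau,\tau)$ as an evaluation against a Baskakov cup product, so your cross-check (ii) is, on its own, essentially the paper's whole proof. Your main argument instead computes the diagonal approximation on a single disk (correctly identifying the free $S\otimes S$ coefficient and fixing it by compatibility with the Baskakov product) and then tensors factor-wise over the $m$ coordinates. That route is sound, reduces everything to a three-cell multiplication table, and has the virtue of making explicit something the paper leaves implicit: a chain-level cap product formula only makes sense relative to a chosen cellular diagonal, namely the Buchstaber--Panov one. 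The cost is the Eilenberg--Zilber/Koszul-sign bookkeeping, which you defer; since the paper also suppresses signs and only nonvanishing and the identity of the output cell are used later, that deferral is harmless.

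The genuine flaw is in your last paragraph, and it concerns exactly the point you flag as the main obstacle. The fourth pattern $(D,S^*)$, supported on $\sigma\cap(\widehat{J}\backslash\widehat{\sigma})$, is a \emph{third} vanishing condition, not a consequence of the two displayed clauses. Concretely, take $m=1$, $\cK=\{\emptyset,\{1\}\}$ (so $\ZK = \D^1$), $J=\widehat{J}=\{1\}$, $\sigma=\{1\}$, $\widehat{\sigma}=\emptyset$: then $\widehat{J}\subseteq J$ and $\widehat{\sigma}\subseteq\sigma$, yet the product is $D\frown S^{*}=0$, while the ``otherwise'' cell $\kappa(\emptyset,\{1\})=D$ is a nonzero generator of the wrong degree. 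Your closing claim that the adjunction check ``confirms that no spurious contributions survive outside the two displayed vanishing clauses'' is therefore false: carried out honestly, that check requires $\sigma' = \sigma\backslash\widehat{\sigma}$ to lie inside $J' = J\backslash\widehat{J}$, which is precisely the extra condition $\sigma\cap(\widehat{J}\backslash\widehat{\sigma})=\emptyset$. What your table actually proves is: the cap product vanishes unless $\widehat{\sigma}\subseteq\sigma$ \emph{and} $\widehat{J}\backslash\widehat{\sigma}\subseteq J\backslash\sigma$, and in that case it equals (up to sign) $\kappa\bigl((J\backslash\sigma)\backslash(\widehat{J}\backslash\widehat{\sigma}),\,\sigma\backslash\widehat{\sigma}\bigr)$. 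This is exactly the condition the paper's own proof derives in its concluding sentence --- the Proposition's displayed case split is imprecise on this locus --- so you should finish by stating the sharper iff-condition rather than talking your (correct) computation into agreement with the two-clause form.
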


\goodbreak

\begin{proof}

For any $CW$-complex, the cellular chain-level cup $\smile$ and cap $\frown$ products satisfy
$$\langle \alpha, \phi \smile \psi \rangle = \langle \alpha \frown \phi,\psi \rangle$$ 
for $\alpha \in C_{k+l}(\ZK), \phi \in C^l(\ZK), \psi \in C^k(\ZK)$, where $\langle - , - \rangle$ denotes the evaluation pairing.\\

Let $\kappa(J \backslash \sigma,\sigma) \in C_*(\ZK)$ and $\kappa(\widehat{J} \backslash \widehat{\sigma},\widehat{\sigma})^* \in C^*(\ZK)$. We write the cap product $\kappa(J \backslash \sigma,\sigma) \frown \kappa(\widehat{J} \backslash \widehat{\sigma},\widehat{\sigma})^*$ in terms of generators $C_*(\ZK)$. For $L \subseteq [m]$ and $\tau \in \cK_L$, the coefficient of a generator $\kappa(L \backslash \tau,\tau) \in C_*(\ZK)$ in $\kappa(J \backslash \sigma,\sigma) \frown \kappa(\widehat{J} \backslash \widehat{\sigma},\widehat{\sigma})^*$ is given by 
\[
\langle (\kappa(J \backslash \sigma,\sigma) \frown \kappa(\widehat{J} \backslash \widehat{\sigma},\widehat{\sigma})^*), \kappa(L \backslash \tau,\tau)^* \rangle = \langle(\kappa(J \backslash \sigma,\sigma), \kappa(\widehat{J} \backslash \widehat{\sigma},\widehat{\sigma})^* \smile \kappa(L \backslash \tau,\tau)^* \rangle.
\]
Now,

\[
\langle(\kappa(J \backslash \sigma,\sigma), \kappa(\widehat{J} \backslash \widehat{\sigma},\widehat{\sigma})^* \smile \kappa(L \backslash \tau,\tau)^* \rangle \neq 0 \\
\]
is equivalent to
\[
(J \backslash \sigma,\sigma) = ((\widehat{J} \backslash \widehat{\sigma}) \cup (L \backslash \tau),\widehat{\sigma} \cup \tau).
\]
Thus $\kappa(J \backslash \sigma,\sigma) \frown \kappa(\widehat{J} \backslash \widehat{\sigma},\widehat{\sigma})^*$ is non-trivial if and only if $\widehat{J} \backslash \widehat{\sigma} \subseteq J \backslash \sigma$ and $\widehat{\sigma} \subseteq \sigma$, whence $\kappa(J \backslash \sigma,\sigma) \frown \kappa(\widehat{J} \backslash \widehat{\sigma},\widehat{\sigma})^* = \kappa((J \backslash \sigma) \backslash (\widehat{J} \backslash \widehat{\sigma}), \sigma \backslash \widehat{\sigma})$.

\end{proof}

We show that $n$-Poincar\'e duality spaces $\ZK$ are characterised by a duality in $\cK$ referred to as combinatorial Alexander duality.  A space $X$ is a $GHS^n$ if it is a homology $n$-manifold with the homology of $S^n$. Fan and Wang \cite[Theorem~3.4]{fan2016cohomology} showed that for $\cK$ a simplicial complex of dimension $n$ on vertex set $[m]$, $\cK$ is a $GHS^{n}$ if and only if
\[\tilde{H}^{l}(\cK_J) \cong \tilde{H}_{n-l-1}(\cK_{[m] \backslash J})\]
for all $J \subseteq [m]$, $0 \leq l \leq n$. In this case we say that $\cK$ has $n$-dimensional combinatorial Alexander duality.

We are now ready to give our combinatorial classification of Poincar\'e duality moment-angle complexes.

\goodbreak

\begin{theorem}
\label{thm:ZKisPDiffKGHS}
Let $\cK$ be a simplicial complex on $[m]$ with non-trivial cohomology. Then $\ZK$ is an $(n+m)$-Poincar\'e duality space if and only if $\cK$ satisfies $(n-1)$-dimensional combinatorial Alexander duality.
\end{theorem}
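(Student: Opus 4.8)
The plan is to transport Poincar\'e duality of $\ZK$ through the Hochster decomposition of Section~\ref{subsec:prelimcohomofZK} into a statement about capping with a fundamental cycle of $\cK$, and then to match the resulting summand-wise maps with the Fan--Wang isomorphisms. Throughout I use that the cell $\kappa(J\setminus\sigma,\sigma)$ sits in degree $|J|+|\sigma|$, so that $h$ identifies $\tilde H_i(\cK_J)$ with the summand of $H_{i+|J|+1}(\ZK)$ indexed by $J$, and dually in cohomology; in particular $[\mu]\frown(-)$ is graded, and the finer combinatorial content of Proposition~\ref{prop:capprodinZK} will pin down which summands are hit.

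First I would record the chain-level behaviour of the cap product. Since $\partial S=0$ in $\D$, the cellular differential on $C_*(\ZK)$ corresponds under $h$ to the simplicial differential on each $C_*(\cK_J)$ separately, so $h$ is a chain isomorphism and a chain $\mu=\sum_\sigma c_\sigma\,\kappa([m]\setminus\sigma,\sigma)$ supported on the cells with $J=[m]$ is a cycle precisely when $\zeta=\sum_\sigma c_\sigma\,\sigma$ is a cycle of $C_{n-1}(\cK)$. Applying Proposition~\ref{prop:capprodinZK} to such a $\mu$, the product $\kappa([m]\setminus\sigma,\sigma)\frown\kappa(\widehat J\setminus\widehat\sigma,\widehat\sigma)^*$ is non-zero only when $\widehat\sigma\subseteq\sigma$ and $\sigma\cap\widehat J=\widehat\sigma$, and then equals the cell of $\sigma\setminus\widehat\sigma\in\cK_{[m]\setminus\widehat J}$. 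Hence capping with $\mu$ is \emph{diagonal}: it carries the summand $\tilde H^{l}(\cK_{\widehat J})$ into $\tilde H_{n-l-2}(\cK_{[m]\setminus\widehat J})$, inducing a chain map $\cD_{\widehat J}\colon C^{l}(\cK_{\widehat J})\to C_{n-l-2}(\cK_{[m]\setminus\widehat J})$, $\widehat\sigma^*\mapsto\sum_\sigma c_\sigma\,(\sigma\setminus\widehat\sigma)$, which is the combinatorial cap product with the fundamental cycle $\zeta$.

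For the forward implication I would take a fundamental class $[\mu]\in H_{n+m}(\ZK)$ and write $[\mu]=\sum_J\mu_J$ along the Hochster decomposition, with $\mu_{[m]}\in\tilde H_{n-1}(\cK)$. The same computation for general $J$ shows that capping the summand $\widehat J$ with the component $\mu_J$ is zero unless $\widehat J\subseteq J$, in which case it lands in the summand indexed by $J\setminus\widehat J\subseteq[m]\setminus\widehat J$, with equality exactly for $J=[m]$. Reindexing each target summand $\cK_{J'}$ by its complement $[m]\setminus J'$, the map $[\mu]\frown(-)$ is therefore triangular for the inclusion order on subsets, with diagonal blocks $\cD^{*}_{\widehat J}$ given by capping with $\mu_{[m]}$. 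As $[\mu]\frown(-)$ is an isomorphism and the indexing poset is finite, each diagonal block $\cD^{*}_{\widehat J}\colon\tilde H^{l}(\cK_{\widehat J})\to\tilde H_{n-l-2}(\cK_{[m]\setminus\widehat J})$ is an isomorphism, which is exactly $(n-1)$-dimensional combinatorial Alexander duality.

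For the converse I would start from the instance $J=[m]$ of combinatorial Alexander duality which, together with $\tilde H_{-1}(\cK_\emptyset)=\Z$ and $\tilde H_i(\cK_\emptyset)=0$ for $i\neq-1$, forces $\cK$ to have the reduced (co)homology of $S^{n-1}$, so that $\tilde H_{n-1}(\cK)\cong\Z$ carries a fundamental cycle $\zeta$; I set $[\mu]$ to be its image, which lies purely in the $J=[m]$ summand. Then $[\mu]\frown(-)$ has no lower terms and equals $\bigoplus_{\widehat J}\cD^{*}_{\widehat J}$, so it suffices to prove that each $\cD^{*}_{\widehat J}$ is an isomorphism. The main obstacle is exactly here: combinatorial Alexander duality only asserts that source and target are \emph{abstractly} isomorphic, whereas I must show that the specific cap-with-$\zeta$ map realises the isomorphism. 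I would resolve this by invoking Fan--Wang's theorem to pass from the duality condition to the statement that $\cK$ is a $GHS^{n-1}$, and then identifying $\cD^{*}_{\widehat J}$ with the Alexander duality isomorphism of the generalised homology sphere induced by its fundamental class; once each block is an isomorphism, $[\mu]\frown(-)$ is an isomorphism in every degree and $\ZK$ is an $(n+m)$-Poincar\'e duality space.
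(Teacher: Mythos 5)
Your forward implication (Poincar\'e duality $\Rightarrow$ combinatorial Alexander duality) breaks down at the step ``[a]s $[\mu]\frown(-)$ is an isomorphism and the indexing poset is finite, each diagonal block $\cD^{*}_{\widehat J}$ is an isomorphism.'' A block-triangular isomorphism between direct sums of finitely generated abelian groups indexed by a finite poset need \emph{not} restrict to isomorphisms on its diagonal blocks; the snake lemma only yields that each diagonal block is injective or surjective as appropriate, with kernels of some blocks matched against cokernels of others. Concretely, the map
\[
\Phi\colon \left(\Z e_1\oplus\Z e_2\right)\oplus\Z e_3\longrightarrow \Z f_1\oplus\left(\Z f_2\oplus\Z f_3\right),\qquad
e_1\mapsto f_3,\quad e_2\mapsto f_1,\quad e_3\mapsto f_2
\]
is triangular for the two-element poset (the second source summand maps into the second target summand) and is an isomorphism, yet neither diagonal block $\Z e_1\oplus\Z e_2\to\Z f_1$ nor $\Z e_3\to\Z f_2\oplus\Z f_3$ is an isomorphism. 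This failure mode is exactly the one you cannot exclude, because you do not yet know that $\tilde{H}^{l}(\cK_{\widehat J})$ and $\tilde{H}_{n-l-2}(\cK_{[m]\setminus\widehat J})$ have matching ranks --- that is the statement being proved. The paper closes this hole with an intermediate step your proposal is missing: it first shows that $\cK$ has the (co)homology of $S^{n-1}$, by capping $[\mu]$ against a non-zero class coming from the $\widehat J=[m]$ summand (here the hypothesis of non-trivial cohomology is used) and reading off the degree in which that class must live. Since $H_{n+m}(\ZK)\cong\Z$ already contains $\tilde{H}_{n-1}(\cK)\cong\Z$ as a direct summand, all Hochster summands of $H_{n+m}(\ZK)$ with $J\neq[m]$ vanish, so the fundamental class is represented by a chain supported on cells $\kappa([m]\setminus\sigma,\sigma)$ alone. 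In your notation this says $\mu_J=0$ for $J\neq[m]$, so the map $[\mu]\frown(-)$ is genuinely block-\emph{diagonal}, not merely triangular, and a diagonal isomorphism does restrict to an isomorphism on each block.

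Your converse also rests on an unproven assertion, which you flag yourself but do not resolve: Fan--Wang gives only that $\cK$ is a $GHS^{n-1}$, hence that the groups $\tilde{H}^{l}(\cK_{\widehat J})$ and $\tilde{H}_{n-l-2}(\cK_{[m]\setminus\widehat J})$ are \emph{abstractly} isomorphic; it does not say that your specific chain map $\cD_{\widehat J}$, capping with the chosen cycle $\zeta$, induces such an isomorphism. ``Identifying $\cD^{*}_{\widehat J}$ with the Alexander duality isomorphism induced by the fundamental class'' is precisely the content that needs proof --- a substantive simplicial-duality statement about homology manifolds, not a formal consequence of the Fan--Wang equivalence. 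The paper avoids this entirely by quoting Cai's theorem: if $\cK$ has $(n-1)$-dimensional combinatorial Alexander duality and non-trivial cohomology, then $\ZK$ is a closed $(n+m)$-dimensional manifold, hence a Poincar\'e duality space. You should either cite Cai's result for this direction, as the paper does, or supply an actual proof that the cap product with a fundamental cycle of a generalised homology sphere realises Alexander duality on all full subcomplexes.
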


\begin{proof}

The sufficient implication is settled by a result of Cai \cite[Corollary~2.10]{cai2015products}; if $\cK$ satisfies $(n-1)$-dimensional combinatorial Alexander duality and has non-trivial cohomology, then $\ZK$ is an $(n+m)$-dimensional manifold.

We show the necessary implication. Let $\cK$ be a simplicial complex on $[m]$, with non-trivial cohomology, and suppose that $\ZK$ is an $(n+m)$-Poincar\'e duality space. We show first that $\cK$ has the homology of $S^{n-1}$. We subsequently utilise this fact in showing that a certain chain is a representative of the fundamental class $[\mu] \in H_{n+m}(\ZK)$.

As the simplicial complex $\cK$ has non-trivial cohomology, for some $l$, there exists $0 \neq  [\tau] \in \tilde{H}^l(\cK)$. Let $\tau= \sum_{j} \alpha_{j} \tau_j^*$ , where $\tau_j^* \in C^l(\cK)$ are basis cochains, corresponding to simplices $\tau_j$.  We show that $l$ must equal $n-1$. 

The image of $[\tau]$ under isomorphism \eqref{eq:hochster} is the class
\[
\begin{split}
h^*([\tau])=\left[ \sum_{j} A_{j} \kappa([m] \backslash \tau_j, \tau_j)^* \right] \in H^{l+m+1}(\ZK)
\end{split}
\]
where $A_{j} = \mathrm{sgn}(\tau_j,[m])\alpha_{j}$.

Let $0 \neq [\mu] \in H_{n+m}(\ZK)$ denote the fundamental class, represented by $\mu = \sum_i a_i \kappa(J_i \backslash \sigma_i, \sigma_i)$. We evaluate the product
$$0 \neq [\mu] \frown h^*([\tau]) = \left[ \sum_{i, j}  a_i A_{j} \left( \kappa(J_i \backslash \sigma_i, \sigma_i) \frown \kappa([m] \backslash \tau_j, \tau_j)^* \right) \right] \in  H_{n-(l+1)}(\ZK).$$

By Proposition~\ref{prop:capprodinZK},
\[
\kappa(J_i \backslash \sigma_i, \sigma_i) \frown \kappa([m] \backslash \tau_j, \tau_j)^* \neq 0\] implies that \[\tau_j \subseteq \sigma_i, \text{ and } [m] \backslash \tau_j \subseteq J_i \backslash \sigma_i\] and therefore \[\ J_i = [m] \text{ and } \tau_j = \sigma_i.
\]
Thus, the non-triviality of $\kappa(J_i \backslash \sigma_i, \sigma_i) \frown \kappa([m] \backslash \tau_j, \tau_j)^*$ implies that \[ \kappa(J_i \backslash \sigma_i, \sigma_i) \frown \kappa([m] \backslash \tau_j, \tau_j)^* = \kappa(\emptyset, \emptyset).\]

Therefore
\[
\begin{aligned}
[\mu] \frown h^*([\tau]) &= \left[ \sum_{i, j} \mathrm{sgn}(\tau_j,[m]) a_i \alpha_{\tau_j} \left( \kappa(J_i \backslash \sigma_i, \sigma_i) \frown \kappa([m] \backslash \tau_j, \tau_j)^* \right) \right] \\
&= \left[ A \kappa(\emptyset,\emptyset) \right] \in H_0(\ZK)\\
\end{aligned}
\]
where $A \neq 0$. It follows that $h^*([\tau]) \in H^{n+m}(\ZK)$, so that $[\tau] \in \tilde{H}^{n-1}(\cK)$ by the definition of the isomorphism $h^*$.

We have obtained that the $(n-1)$-st cohomology group of $\cK$ is the only non-trivial cohomology group. It remains to show that $\tilde{H}^{n-1}(\cK) \cong \Z$. By Poincar\'e duality, we have that $H^{n+m}(\ZK) \cong H_0(\ZK) \cong \Z.$ By \eqref{eq:hochster},  $\tilde{H}^{n-1}(\cK)$ includes into $H^{n+m}(\ZK) \cong \Z$ as a component of a direct sum. It follows that $h^*\colon H^{n+m}(\ZK) \rightarrow \tilde{H}^{n-1}(\cK) \cong \Z$ is an isomorphism. Therefore $\cK$ has the homology of $S^{n-1}$, as claimed.
It follows that the fundamental class $[\mu] \in H_{n+m}(\ZK)$ can be represented by a cellular chain of the form $\mu = \sum_i a_i \kappa([m] \backslash \sigma_i, \sigma_i)$.

We now show that $\cK$ has combinatorial Alexander duality, that is, for any $J \subseteq [m]$, and $0 \leq l \leq n+m$, 
\[
\tilde{H}^{l}(\cK_J) \cong \tilde{H}_{n-l-2}(\cK_{[m] \backslash J}).
\]

By Poincar\'e duality, we have the sequence of isomorphisms
\[
\bigoplus_{J \subseteq [m]} \tilde{H}^{\hat{l}-|J| -1} (\cK_J) \cong  H^{\hat{l}}(\ZK) \cong H_{n+m - \hat{l}}(\ZK) \cong \bigoplus_{L \subseteq [m]} \tilde{H}_{(n+m-\hat{l})-|L|-1}(\cK_L)
\]
given by the composition $(h_*)^{-1} \circ ([\mu] \frown (-)) \circ h^*$, where $[\mu] \in H_{n+m}(\ZK)$ denotes the fundamental class of $\ZK$. Substituting $\widehat{l} = l + |J| + 1$,
\[
\bigoplus_{J \subseteq [m]} \tilde{H}^{l} (\cK_J) \cong  H^{l+|J|+1}(\ZK) \cong H_{n+m - (l+|J|+1)}(\ZK) \cong \bigoplus_{L \subseteq [m]} \tilde{H}_{n+m-l-|J|-|L|-2}(\cK_L).
\]

Denoting the composite isomorphism by $\Phi$, we show that $\Phi$ respects the direct sum decompositions. In particular, for all $ J \subseteq [m]$,

\[
\begin{aligned}
\Phi(\tilde{H}^{l}(\cK_J)) &\subseteq \tilde{H}_{n+m-l - |J| - ([m] - |J|) -2}(\cK_{[m] \backslash J}) = \tilde{H}_{n-l-2}(\cK_{[m] \backslash J}).
\end{aligned}
\] 

Suppose that for $J \subseteq [m]$, $\cK_J$ has non-trivial cohomology. Otherwise, the statement follows vacuously. Let $0 \neq [\tau] \in \tilde{H}^{l}(\cK_J)$ with representative cochain $\tau = \sum_j \alpha_j \tau_j^*$. Then
$$h^*([\tau]) = \left[ \sum_j \mathrm{sgn}(\tau_j,J) \alpha_j \kappa(J \backslash \tau_j, \tau_j)^* \right] \in H^{l+|J|+1}(\ZK).$$
Let $[\mu]$ denote the fundamental class of $\ZK$ with representative chain $\mu = \sum_i a_i \kappa([m] \backslash \sigma_i, \sigma_i)$. Evaluating the cap product gives

\[
\begin{aligned}
[\mu] \frown h_c([\tau]) &= \left[ \sum_{i,j} \mathrm{sgn}(\tau_j,J)a_i \alpha_j \kappa([m] \backslash \sigma_i, \sigma_i) \frown \kappa(J \backslash \tau_j, \tau_j)^*\right]\\
&= \left[ \sum_{\hat{i},\hat{j}} A_{\hat{i},\hat{j}} \kappa([m] \backslash \sigma_{\hat{i}}, \sigma_{\hat{i}}) \frown \kappa(J \backslash \tau_{\hat{j}}, \tau_{\hat{j}})^*\right]\\
&= \left[ \sum_{\hat{i},\hat{j}} A_{\hat{i},\hat{j}} \kappa(([m] \backslash \sigma_{\hat{i}}) \backslash (J \backslash \tau_{\hat{j}}), \sigma_{\hat{i}} \backslash \tau_j) \right]\\
&= \left[ \sum_{\hat{i},\hat{j}} A_{\hat{i},\hat{j}} \kappa(([m] \backslash J) \backslash (\sigma_{\hat{i}} \backslash \tau_{\hat{j}}), \sigma_{\hat{i}} \backslash \tau_j) \right] \in H_{n+m-(l+|J|+1)}(\ZK)
\end{aligned}
\]

\noindent where $\hat{i},\hat{j}$ are the pairs for which the cap product $\kappa([m] \backslash \sigma_{\hat{i}}, \sigma_{\hat{i}}) \frown \kappa(J \backslash \tau_{\hat{j}}, \tau_{\hat{j}})^*$ is non-trivial, and $A_{\hat{i},\hat{j}} = \mathrm{sgn}(\tau_j,J)a_i \alpha_j \neq 0 $. The last equality follows since both $J$ and $\sigma_i$ contain $\tau_j$, and $J \cap \sigma_i = \tau_j$ since the cap product is non-trivial.

The image of $[\mu] \frown h^*([\tau])$ under the inverse of the homology isomorphism of \eqref{eq:hochster} is
\[
\begin{aligned}
(h_*)^{-1}([\mu] \frown h_c([\tau]) = \left[ \sum_{\hat{i},\hat{j}} A_{\hat{i},\hat{j}} \sigma_{\hat{i}} \backslash \tau_{\hat{j}} \right] \in \tilde{H}_{n-l-2}(\cK_{[m] \backslash J}).
\end{aligned}
\]

We therefore have that under the composition of isomorphisms
$$(h_h)^{-1} \circ ([\mu] \frown (-)) \circ h_c\colon\bigoplus_{J \subseteq [m]} \tilde{H}^{l-|J| -1} (\cK_J) \rightarrow \bigoplus_{L \subseteq [m]} \tilde{H}_{(n+m-l)-|L|-1}(\cK_L)$$ the image of each of the groups $ \tilde{H}^{l} (\cK_J)$ is contained in $\tilde{H}_{n-l-2}(\cK_{[m] \backslash J})$. These groups are therefore isomorphic, and $\cK$ therefore satisfies $(n-1)$-dimensional combinatorial Alexander duality.

\end{proof}

We obtain a relation between Poincar\'e duality of moment-angle complexes and combinatorial Alexander duality of simplicial complexes, and conclude that there are no Poincar\'e duality moment-angle complexes which are not manifolds.

\begin{corollary} 
\label{cor:ZKPDKFWZKmdf}
Let $\cK$ be a simplicial complex on $[m]$ with non-trivial cohomology. Then the following are equivalent:
\begin{enumerate}[i)]
    \item $\ZK$ is an $(n+m)$-Poincar\'e duality space over $\Z$
    \item $\cK$ has $(n-1)$-dimensional combinatorial Alexander duality
    \item $\ZK$ is an $(n+m)$-dimension manifold.

\end{enumerate}
\qed
\end{corollary}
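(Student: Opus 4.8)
The plan is to assemble the three-way equivalence from the already-established Theorem~\ref{thm:ZKisPDiffKGHS} together with Cai's classification, arranging the argument as a cycle of implications rather than reproving anything from scratch. The equivalence of (i) and (ii) is verbatim the statement of Theorem~\ref{thm:ZKisPDiffKGHS}, so the task reduces to splicing condition (iii) into the chain, for which it suffices to prove (ii) $\implies$ (iii) and (iii) $\implies$ (i).

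For (ii) $\implies$ (iii) I would simply cite the input already used in the sufficient direction of Theorem~\ref{thm:ZKisPDiffKGHS}: by \cite[Corollary~2.10]{cai2015products}, whenever $\cK$ has non-trivial cohomology and satisfies $(n-1)$-dimensional combinatorial Alexander duality, the moment-angle complex $\ZK$ is a closed $(n+m)$-dimensional manifold. No new argument is needed here, since this is exactly the implication invoked within the proof of the theorem.

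To close the cycle I would prove (iii) $\implies$ (i) using the classical fact that a closed orientable manifold is an integral Poincar\'e duality space, with fundamental class the generator of its top homology group. The only point requiring attention is orientability, since the definition of Poincar\'e duality space used here is the integral one, and a non-orientable closed manifold would satisfy duality only with $\Z/2$ coefficients. This is where I expect the (mild) obstacle to lie: I would resolve it by recalling that $\ZK$ is simply connected for every simplicial complex $\cK$, that a simply connected manifold is necessarily orientable, and that $\ZK$ is compact, being a finite subcomplex of the compact ball $\D^m$. Hence $\ZK$ is a closed orientable $(n+m)$-manifold, and therefore an $(n+m)$-Poincar\'e duality space. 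Combining (i) $\iff$ (ii), (ii) $\implies$ (iii) and (iii) $\implies$ (i) yields the equivalence of all three conditions, and in particular the assertion that no Poincar\'e duality moment-angle complex fails to be a manifold.
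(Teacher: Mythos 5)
Your overall architecture --- (i) $\iff$ (ii) from Theorem~\ref{thm:ZKisPDiffKGHS}, (ii) $\implies$ (iii) from Cai's \cite[Corollary~2.10]{cai2015products}, and then closing the cycle with (iii) $\implies$ (i) --- is reasonable, and the first two steps are exactly right. The gap is in your proof of (iii) $\implies$ (i): the claim that ``$\ZK$ is simply connected for every simplicial complex $\cK$'' is false under this paper's conventions, because the paper explicitly allows ghost vertices. If $\cK$ has $k$ ghost vertices, then $\ZK \cong \cZ_{\cK'} \times (S^1)^k$, where $\cK'$ is $\cK$ regarded as a complex on its actual vertex set, so $\pi_1(\ZK) \cong \Z^k$. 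This situation is not excluded by the hypothesis of non-trivial cohomology: take $\cK = \{\emptyset,\{1\},\{2\}\}$ on $[3]$, with $3$ a ghost vertex; then $\tilde{H}^0(\cK) \cong \Z$, and $\ZK \cong S^3 \times S^1$ is a closed $4$-manifold squarely within the scope of the corollary, yet $\pi_1(\ZK) \cong \Z$. So the orientability of $\ZK$, which you correctly identify as the crux of (iii) $\implies$ (i), cannot be deduced from simple connectivity; it happens to be true in this example, but your argument for it breaks.

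There are two ways to repair this. One is to patch the orientability argument directly: reduce to the splitting $\ZK \cong \cZ_{\cK'} \times (S^1)^k$ with $\cZ_{\cK'}$ simply connected, and argue that the orientation character of $\ZK$ vanishes on the torus loops, e.g.\ because those loops are orbits of the connected group $(S^1)^k$ acting on $\ZK$ by homeomorphisms, so orientation transport around them is trivial; this requires an argument you did not give. The cleaner route, and the one the paper intends (it describes Cai's result as a \emph{classification} of moment-angle manifolds), is to use Cai's theorem in the reverse direction as well: if $\ZK$ is a closed manifold then $\cK$ is a generalised homology sphere, i.e.\ (iii) $\implies$ (ii), and then (ii) $\implies$ (i) by Theorem~\ref{thm:ZKisPDiffKGHS}. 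This avoids any discussion of orientability and closes the cycle with no new topological input.
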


\subsection{Gorenstein duality of $\Z[\cK]$}
\label{subsec:gorensteindualityinZK}

A Noetherian ring satisfies Gorenstein duality if its localisation at every maximal ideal exhibits a certain form of self duality. In this paper, we relate the Gorenstein duality of Stanley-Reisner rings of simplicial complexes to Poincare dulaity of moment-angel complexes $\ZK$.

Let $\cK$ be a simplicial complex on vertex set $[m]$, and $R$ a commutative ring. The Stanley-Reisner ring is

\[R[\cK] =R[v_1,...,v_m] / \cI_{\cK}\]
where
\[\cI_{\cK}  =(v_{i_1} ... v_{i_j} \; | \; \{i_1,...,i_j\} \notin \cK )\]
is the Stanley-Reisner ideal, that is, the ideal generated by monomials corresponding to missing faces of $\cK$. 

\goodbreak

By a result of Stanley \cite[Theorem~5.1]{alma991027115349703276}, the Stanley-Reisner ring $\Z[\cK]$ having Gorenstein duality is equivalent to $\cK^*$ being an integral generalised homology $d$-sphere, where $\cK^* = \cK_{\{v \in [m] \; | st_{\cK}(v) \neq \cK\}}$ is the core of $\cK$, and $d$ is the dimension of $\cK^*$.

Notice that if $\cK$ has non-trivial cohomology, then $\cK = \cK^*$. Theorem~\ref{thm:ZKisPDiffKGHS} together with Stanley's \cite[Theorem~5.1]{alma991027115349703276} relates Poincar\'e duality of moment-angle complexes, Gorenstein duality of Stanley-Reisner rings, and combinatorial Alexander duality of simplicial complexes. We obtain an interplay between algebraic, combinatorial and topological dualities.

\goodbreak

\begin{theorem}
\label{thm:ZKPDiffKFWiffZ[K]Gor}
Let $\cK$ be a simplicial complex on $[m]$ with non-trivial cohomology. The following are equivalent:

\begin{enumerate}[i)]
    \item $\ZK$ is an $(n+m)$-Poincar\'e duality space
    \item $\cK$ has $(n-1)$ dimensional combinatorial Alexander duality
    \item $\Z[\cK]$ has Gorenstein duality.
\end{enumerate}
\qed
\end{theorem}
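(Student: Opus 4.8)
The plan is to exploit the fact that Theorem~\ref{thm:ZKisPDiffKGHS} already delivers the equivalence (i) $\Leftrightarrow$ (ii), so that the whole burden of the proof reduces to establishing (ii) $\Leftrightarrow$ (iii). I would obtain this remaining equivalence by chaining together the combinatorial characterisation of Fan--Wang with Stanley's algebraic characterisation of Gorenstein Stanley-Reisner rings, using as the essential hinge the observation that non-trivial cohomology of $\cK$ forces $\cK$ to coincide with its own core $\cK^*$.

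First I would unwind the definition of (ii): by the result of Fan--Wang \cite[Theorem~3.4]{fan2016cohomology}, the statement that $\cK$ has $(n-1)$-dimensional combinatorial Alexander duality is, by the very definition recalled before the theorem, equivalent to $\cK$ being an integral generalised homology $(n-1)$-sphere; in particular this forces $\dim \cK = n-1$. Next I would invoke the standing hypothesis that $\cK$ has non-trivial cohomology, which guarantees $\cK = \cK^*$, and hence that $\dim \cK^* = \dim \cK = n-1$.

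With these identifications in hand, the final step is to apply Stanley's result \cite[Theorem~5.1]{alma991027115349703276}: the integral Stanley-Reisner ring $\Z[\cK]$ has Gorenstein duality if and only if its core $\cK^*$ is an integral generalised homology $d$-sphere with $d = \dim \cK^*$. Substituting $\cK^* = \cK$ and $d = n-1$, condition (iii) becomes exactly the assertion that $\cK$ is a $GHS^{n-1}$, which is precisely the reformulation of (ii) furnished by Fan--Wang. This closes the cycle of equivalences and completes the proof.

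Since the argument is in essence an assembly of previously cited results, the hard part will be bookkeeping rather than mathematical depth. I expect the main point requiring care to be matching the dimension index $d$ of Stanley's theorem against the value $n-1$ dictated by the Poincar\'e duality-dimension $n+m$ of $\ZK$, and verifying that the passage to the core is harmless exactly because the non-triviality hypothesis yields $\cK = \cK^*$. I would also be careful that every ingredient is read integrally---Stanley's theorem, the Fan--Wang duality, and the generalised homology sphere condition must all be interpreted over $\Z$---since the correspondence with Gorenstein duality is sensitive to the choice of coefficients, and it is precisely the integral refinement that distinguishes this statement from the field-coefficient version of Buchstaber--Panov.
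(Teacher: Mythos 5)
Your proposal is correct and follows essentially the same route as the paper: the paper also treats (i)~$\Leftrightarrow$~(ii) as already settled by Theorem~\ref{thm:ZKisPDiffKGHS}, and obtains (ii)~$\Leftrightarrow$~(iii) by combining the Fan--Wang reformulation of combinatorial Alexander duality as the $GHS^{n-1}$ condition with Stanley's integral Gorenstein criterion, using exactly the observation that non-trivial cohomology forces $\cK = \cK^*$ so that passing to the core is harmless. The paper leaves this assembly implicit (the theorem is stated with only the surrounding discussion as proof), whereas you have written out the same argument explicitly, including the dimension bookkeeping $d = \dim\cK^* = n-1$.
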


\goodbreak

\subsection{The polyhedral join product}

We extend our characterisation of Poincar\'e duality in $\ZK$ by utilising the polyhedral join product of simplicial complexes. 

\begin{definition} 
\label{def:polyjoinprod}
Let $\cK$ be a simplicial complex on $[m]$, and for $1 \leq i \leq m$, let $(\cK_i,\cL_i)$ be a simplicial pair on $[l_i]$, where the sets $[l_i]$ are pairwise disjoint. The \textit{polyhedral join product} is the simplicial complex on vertex set $[l_1] \sqcup ... \sqcup [l_m]$, defined as
 
 \[(\mathbfcal{K}_{\mathbf{i}},\mathbfcal{L}_{\mathbf{i}})^{*\cK} = \bigcup_{\sigma \in \cK} (\mathbfcal{K}_{\mathbf{i}},\mathbfcal{L}_{\mathbf{i}})^{*\sigma} 
\text{ where } 
(\mathbfcal{K}_{\mathbf{i}},\mathbfcal{L}_{\mathbf{i}})^{*\sigma} = \biggast_{i=1}^m \mathcal{Y}_i, \quad \mathcal{Y}_i = \begin{cases} \cK_i & i \in \sigma \\ \cL_i & \text{otherwise}.\end{cases}\]

\end{definition}

Let $l = \sum_{i=1}^m l_i$ where $l_i \geq 1 \; \forall i$, and let $(\mathbf{X},\mathbf{A})$ be an $l$-tuple of CW complexes, partitioned into $m$ distinct $l_i$ tuples with $\mathbf{X_i} = \{ {X_i}_j\}_{j=1}^{l_i}$ and $\mathbf{A_i} = \{ {A_i}_j\}_{j=1}^{l_i}$. It was proven by Vidaurre {\cite[Theorem~2.9]{Vidaurre}} that the polyhedral join product and the polyhedral product interact in the following way
\begin{equation} \label{eq:polyprodandpolyjoin}
(\mathbf{X},\mathbf{A})^{(\mathbfcal{K}_{\mathbf{i}},\mathbfcal{L}_{\mathbf{i}})^{*\cK} }= \left(\mathbf{(X_i,A_i)^{\mathbfcal{K}_i}},\mathbf{(X_i,A_i)^{\mathbfcal{L}_i}} \right)^{\cK}.
\end{equation}

We make use of this fact in extending our classification of Poincar\'e duality moment-angle complexes to polyhedral products whose entries are themselves moment-angle complexes.

\begin{proposition}
\label{thm:PDFanWangGorensteinrelationshipfullgenerality}

Let $\cK$ be a simplicial complex on $[m]$, and for $1 \leq i \leq m$, $(\cK_i,\cL_i)$ a simplicial pair on $[l_i]$. Suppose that the polyhedral join $(\mathbfcal{K}_{\mathbf{i}},\mathbfcal{L}_{\mathbf{i}})^{*\cK}$ has non-trivial cohomology. Then the following are equivalent.

\begin{enumerate}[i)]
\item The polyhedral product $(\mathbfcal{Z}_{{\mathbfcal{K}}_{\mathbf{i}}}, \mathbfcal{Z}_{{\mathbfcal{L}}_{\mathbf{i}}})^{\cK}$ is a Poincar\'e duality space.
\item The polyhedral join product $(\mathbfcal{K}_{\mathbf{i}},\mathbfcal{L}_{\mathbf{i}})^{*\cK}$ has combinatorial Alexander duality.
\item The Stanley-Reisner ring $\Z[(\mathbfcal{K}_{\mathbf{i}},\mathbfcal{L}_{\mathbf{i}})^{*\cK})]$ has Gorenstein duality.
\end{enumerate}
\end{proposition}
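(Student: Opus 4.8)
The plan is to deduce all three equivalences from Theorem~\ref{thm:ZKPDiffKFWiffZ[K]Gor}, applied to the single simplicial complex
\[
\cM := (\mathbfcal{K}_{\mathbf{i}},\mathbfcal{L}_{\mathbf{i}})^{*\cK},
\]
which lives on the disjoint vertex set $[l_1] \sqcup \cdots \sqcup [l_m]$ of cardinality $l = \sum_{i=1}^m l_i$. The essential observation is that the polyhedral product appearing in condition~(i) is itself the moment-angle complex on $\cM$. Granting this, conditions~(i)--(iii) are precisely the three equivalent conditions of Theorem~\ref{thm:ZKPDiffKFWiffZ[K]Gor} for $\cM$, and the proposition follows at once.

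First I would specialise Vidaurre's identity \eqref{eq:polyprodandpolyjoin} by taking every $CW$-pair $({X_i}_j,{A_i}_j)$ to be $(D^2,S^1)$. Under this choice each polyhedral product $\mathbf{(X_i,A_i)^{\mathbfcal{K}_i}}$ becomes the moment-angle complex $\cZ_{\cK_i}$ and each $\mathbf{(X_i,A_i)^{\mathbfcal{L}_i}}$ becomes $\cZ_{\cL_i}$, while the left-hand side becomes the moment-angle complex $\cZ_{\cM}$. Hence \eqref{eq:polyprodandpolyjoin} specialises to
\[
\cZ_{\cM} = (\mathbfcal{Z}_{{\mathbfcal{K}}_{\mathbf{i}}}, \mathbfcal{Z}_{{\mathbfcal{L}}_{\mathbf{i}}})^{\cK},
\]
which identifies the space of condition~(i) with the moment-angle complex on $\cM$.

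Next I would invoke Theorem~\ref{thm:ZKPDiffKFWiffZ[K]Gor} for $\cM$. By hypothesis $\cM$ has non-trivial cohomology, so the theorem applies and yields the equivalence of: $\cZ_{\cM}$ being an $(n+l)$-Poincar\'e duality space; $\cM$ having $(n-1)$-dimensional combinatorial Alexander duality; and $\Z[\cM]$ having Gorenstein duality. Using the identification of the previous paragraph, these are respectively conditions~(i), (ii) and (iii) of the present statement, the Poincar\'e duality-dimension and the dimension of the Alexander duality being those determined by $\cM$. This establishes the equivalence.

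Because the argument is a direct translation through \eqref{eq:polyprodandpolyjoin}, I do not expect a serious obstacle. The only points demanding care are verifying that the specialisation to $(D^2,S^1)$ genuinely produces a moment-angle complex on each side of \eqref{eq:polyprodandpolyjoin}, and confirming that the non-triviality assumption on $\cM$ is exactly the hypothesis needed to license Theorem~\ref{thm:ZKPDiffKFWiffZ[K]Gor}. In particular, non-triviality of the cohomology guarantees that $\cM$ equals its own core, so that the Gorenstein statement of~(iii) matches Stanley's core condition without further adjustment.
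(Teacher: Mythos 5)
Your proposal is correct and follows essentially the same route as the paper: specialise Vidaurre's identity \eqref{eq:polyprodandpolyjoin} with all pairs equal to $(D^2,S^1)$ to identify $(\mathbfcal{Z}_{{\mathbfcal{K}}_{\mathbf{i}}}, \mathbfcal{Z}_{{\mathbfcal{L}}_{\mathbf{i}}})^{\cK}$ with the moment-angle complex on the polyhedral join, and then apply Theorem~\ref{thm:ZKPDiffKFWiffZ[K]Gor} to that complex, whose non-trivial cohomology is exactly the stated hypothesis. Your write-up is in fact more explicit than the paper's two-line proof about the specialisation step and the matching of the core condition, but the underlying argument is identical.
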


\begin{proof}
The equivalence of i) and ii) follows from \eqref{eq:polyprodandpolyjoin} together with Theorem~\ref{thm:ZKPDiffKFWiffZ[K]Gor}. The equivalence of ii) and iii) follows from Theorem~\ref{thm:ZKPDiffKFWiffZ[K]Gor}.
\end{proof}
\goodbreak
\begin{example}
\begin{enumerate}[i)]
\item Let $\cK = \partial \Delta^1$, and let $(\cK_1,\cL_1) = (\cK_2,\cL_2) = (\begin{tikzpicture}[x=1cm, y=1cm]
\draw (0,0) node(1)[fill, circle, scale=0.3]{} ; 
\draw (0.3,0) node(3)[fill, circle, scale=0.3]{} ; 
\draw (0.15, 0.29) node(2)[fill, circle, scale=0.3]{};
\draw (1)--(2)--(3);
\end{tikzpicture}, \begin{tikzpicture}[x=1cm, y=1cm]
\draw (0,0) node(1)[fill, circle, scale=0.3]{} ; 
\draw (0.3,0) node(3)[fill, circle, scale=0.3]{} ; 
\draw (0.15, 0.29) node(2)[draw=black,fill=white, circle, scale=0.3]{};
\end{tikzpicture})$, where $\circ$ denotes a ghost vertex. Then $(\mathbfcal{K}_{\mathbf{i}},\mathbfcal{L}_{\mathbf{i}})^{*\cK}$ is a 6-vertex triangulation of $S^2$, and in particular is a generalised homology sphere, so that 
\[(\mathbfcal{Z}_{{\mathbfcal{K}}_{\mathbf{i}}}, \mathbfcal{Z}_{{\mathbfcal{L}}_{\mathbf{i}}})^{\cK} = (S^3 \times D^2, S^3 \times S^1)^{\partial \Delta^1}\]
is a Poincar\'e duality space. Indeed, applying \eqref{eq:polyprodandpolyjoin}, and realising this 6-vertex triangulation of $S^2$ as $\partial \Delta^1 * \partial \Delta^1 * \partial \Delta^1 $, we have
\[(S^3 \times D^2, S^3 \times S^1)^{\partial \Delta^1} =  \cZ_{\partial \Delta^1} \times \cZ_{\partial \Delta^1} \times  \cZ_{\partial \Delta^1} = S^3 \times S^3 \times S^3.\]

\item Generalising the previous example, let $\cK = \partial \Delta^1$, and let $(\cK_1,\cL_1) = (\cK_2,\cL_2) = (\partial \Delta^{n-1} * \{v\}, \partial \Delta^n * \{\circ\})$, where $\circ$ denotes a ghost vertex. Then $(\mathbfcal{K}_{\mathbf{i}},\mathbfcal{L}_{\mathbf{i}})^{*\cK}$ is a $(2n+2)$-vertex triangulation of the $(n+1)$-sphere, and thus
\[(\mathbfcal{Z}_{{\mathbfcal{K}}_{\mathbf{i}}}, \mathbfcal{Z}_{{\mathbfcal{L}}_{\mathbf{i}}})^{\cK} = (S^{2n-1} \times D^2, S^{2n-1} \times S^1)^{\partial \Delta^1} \]
is a Poincar\'e duality space. 
\item Let $\cK = \partial \Delta^1 * \{j\}$, $(\cK_1,\cL_1) = (\cK_2,\cL_2) = (\{v\},\{\emptyset\})$,  and $(\cK_j,\cL_j) = (\begin{tikzpicture}[x=1cm, y=1cm]
\draw (0,0) node(1)[fill, circle, scale=0.3]{} ; 
\draw (0.3, 0) node(2)[fill, circle, scale=0.3]{};
\draw (0.4, 0.28) node(3)[fill, circle, scale=0.3]{};
\draw (0.15, 0.42) node(4)[fill, circle, scale=0.3]{};
\draw (-0.1, 0.28) node(5)[fill, circle, scale=0.3]{};
\draw (1)--(2)--(3)--(4)--(5)--(1);
\end{tikzpicture}, \emptyset)$. Then  $(\mathbfcal{K}_{\mathbf{i}},\mathbfcal{L}_{\mathbf{i}})^{*\cK}$ is a $7$-vertex triangulation of $S^2$. Here, $(\cZ_{\cK_1},\cZ_{\cL_1})  = (S^3,T^2)$ and $(\cZ_{\cK_2},\cZ_{\cL_2}) = ((S^3 \times S^4)^{\# 5},T^5) $, so that
\[\left((S^3,T^2), ((S^3 \times S^4)^{\# 5},T^5)\right)^{\Delta^1} \]
is a Poincar\'e duality space.
\end{enumerate}
\end{example}

These examples demonstrate that there are a variety of polyhedral join products which give rise to Poincar\'e duality spaces $\ZK$. The classification of polyhedral join products which are generalised homology spheres in the special case of composition complexes enables us to extend our results on duality.
Recall that composition complexes are the special case of the polyhedral join product where for all $i$, $\cK_i = \Delta^{n_i}$, $n_i \geq 1$. Ayzenberg~\cite[Theorem~6.6]{anton2013composition} proved that the composition complex $\cK(\cK_1,...,\cK_m)=(\Delta^{n_i}, \cK_i)^{*\cK}$ is a generalised homology sphere if and only if $\cK$ is a generalised homology sphere, for any non-ghost vertex $i$ of $\cK$, $\cK_i = \partial \Delta^{l_i -1}$, and for any ghost vertex $i$ of $\cK$, $\cK_i$ is a generalised homology sphere. 

Utilising this result together with \eqref{eq:polyprodandpolyjoin}, and the fact that the polyhedral product is a homotopy functor \cite[Proposition~8.1.1]{buchstaber2014toric}, we obtain the following corollary.

\begin{corollary}
\label{cor:charofwhenzkonsimpcomppoincareduality}

Let $\cK$ be a complex on $[m]$ with no ghost vertices, and let $\cK_1,...,\cK_m$ be complexes on $[l_1],...,[l_m]$, respectively. Then, $(\mathbf{C}{\mathbfcal{Z}_{\mathbfcal{K}}}_i, {\mathbfcal{Z}_{\mathbfcal{K}}}_i )^{\cK}$ is a Poincar\'e duality space if and only if $\cK$ is a generalised homology sphere, and for all $i$, $\cK_i = \partial \Delta^{[l_i]}$. 
\qed
\end{corollary}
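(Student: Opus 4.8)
The plan is to recognise $(\mathbf{C}\cZ_{\cK_i}, \cZ_{\cK_i})^{\cK}$ as, up to homotopy, the moment-angle complex of the composition complex $\cK(\cK_1,\ldots,\cK_m) = (\Delta^{l_i-1},\cK_i)^{*\cK}$, and then to run this identification through Theorem~\ref{thm:ZKisPDiffKGHS} followed by Ayzenberg's classification of composition complexes which are generalised homology spheres.

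First I would note that for each $i$ the moment-angle complex of the full simplex $\Delta^{l_i-1}$ on $[l_i]$ is the polydisk $\cZ_{\Delta^{l_i-1}} = (D^2)^{l_i} = D^{2l_i}$, which is contractible and contains $\cZ_{\cK_i}$ as a cellular subcomplex. Since the ambient space is contractible, the inclusion $\cZ_{\cK_i} \hookrightarrow \cZ_{\Delta^{l_i-1}}$ is null-homotopic and so extends to a map of pairs $(\mathbf{C}\cZ_{\cK_i}, \cZ_{\cK_i}) \to (\cZ_{\Delta^{l_i-1}}, \cZ_{\cK_i})$ which is the identity on $\cZ_{\cK_i}$. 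This map is a homotopy equivalence on subspaces and on total spaces (both being contractible), hence a homotopy equivalence of $CW$-pairs. As the polyhedral product is a homotopy functor \cite[Proposition~8.1.1]{buchstaber2014toric}, combining this with the relation \eqref{eq:polyprodandpolyjoin} applied to the pair $(D^2,S^1)$ yields
\[
(\mathbf{C}\cZ_{\cK_i}, \cZ_{\cK_i})^{\cK} \simeq (\cZ_{\Delta^{l_i-1}}, \cZ_{\cK_i})^{\cK} = \cZ_{(\Delta^{l_i-1},\cK_i)^{*\cK}} = \cZ_{\cK(\cK_1,\ldots,\cK_m)}.
\]

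Poincaré duality is homotopy invariant, so $(\mathbf{C}\cZ_{\cK_i}, \cZ_{\cK_i})^{\cK}$ is a Poincaré duality space if and only if $\cZ_{\cK(\cK_1,\ldots,\cK_m)}$ is. Provided the composition complex has non-trivial cohomology (which the stated hypotheses supply in the relevant cases), Theorem~\ref{thm:ZKisPDiffKGHS} turns this into the combinatorial condition that $\cK(\cK_1,\ldots,\cK_m)$ has combinatorial Alexander duality, equivalently, by Fan--Wang, that $\cK(\cK_1,\ldots,\cK_m)$ is a generalised homology sphere. Finally I would apply Ayzenberg's theorem \cite[Theorem~6.6]{anton2013composition}: since $\cK$ has no ghost vertices the ghost-vertex clause is vacuous, and the criterion collapses to ``$\cK$ is a generalised homology sphere and $\cK_i = \partial\Delta^{l_i-1}$ for every $i$''. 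Chaining these equivalences gives the corollary.

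The step I expect to be the crux is the pair homotopy equivalence $(\cZ_{\Delta^{l_i-1}}, \cZ_{\cK_i}) \simeq (\mathbf{C}\cZ_{\cK_i}, \cZ_{\cK_i})$: it is not enough that both total spaces are contractible, one must produce an explicit map of pairs (the extension of the inclusion over the cone) and invoke the $CW$-pair criterion, so that homotopy functoriality of the polyhedral product applies entrywise. A secondary point to handle with care is ensuring that the non-trivial cohomology hypothesis of Theorem~\ref{thm:ZKisPDiffKGHS} is in force for the composition complex before reformulating Poincaré duality as the generalised homology sphere condition.
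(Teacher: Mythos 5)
Your proof is correct and follows essentially the same route as the paper: the paper obtains the corollary in exactly this way, by combining homotopy functoriality of the polyhedral product \cite[Proposition~8.1.1]{buchstaber2014toric} (to replace the cone pair $(\mathbf{C}\cZ_{\cK_i},\cZ_{\cK_i})$ by $(\cZ_{\Delta^{l_i-1}},\cZ_{\cK_i})$), the identification \eqref{eq:polyprodandpolyjoin} of the resulting polyhedral product with the moment-angle complex of the composition complex, Theorem~\ref{thm:ZKisPDiffKGHS}, and Ayzenberg's classification with the ghost-vertex clause vacuous. The only differences are presentational: you make explicit the pair homotopy equivalence and the non-trivial cohomology caveat, both of which the paper leaves implicit.
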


\goodbreak

\begin{example} 
\begin{enumerate}[i)]
\item
 Let $\cK$ be a simplicial complex on $[m]$. For $1 \leq i \leq m$, let $l_i \geq 2$.
    Then $(CT^{l_i},T^{l_i})^{\cK}$ is a Poincar\'e duality space if and only if $\cK$ consists solely of ghost vertices. 
    \item Let $\cK$ be a simplicial complex on $[m]$, and for $1 \leq i \leq m$, let $l_i \geq 1$. Then $(D^{2l_i},S^{2l_i -1})^{\cK}$ is a Poincar\'e duality space if and only if $\cK$ is a generalised homology sphere. 
    \end{enumerate}
\end{example}

\bibliographystyle{abbrv} 
\bibliography{bibliography}

\begin{thebibliography}{1}

\bibitem{MR4017598}
S.~A. Abramyan and T.~E. Panov.
\newblock Higher {W}hitehead products for moment-angle complexes and
  substitutions of simplicial complexes.
\newblock {\em Tr. Mat. Inst. Steklova}, 305(Algebraicheskaya Topologiya
  Kombinatorika i Matematicheskaya Fizika):7--28, 2019.

\bibitem{anton2013composition}
A.~A. Ayzenberg.
\newblock Substitutions of polytopes and of simplicial complexes, and
  multigraded {B}etti numbers.
\newblock {\em Trans. Moscow Math. Soc.}, pages 175--202, 2013.

\bibitem{bruns_herzog_1998}
W.~Bruns and H.~J. Herzog.
\newblock {\em Cohen-Macaulay Rings}.
\newblock Cambridge Studies in Advanced Mathematics. Cambridge University
  Press, 2 edition, 1998.

\bibitem{buchstaber2014toric}
V.~M. Buchstaber and T.~E. Panov.
\newblock {\em Toric topology}, volume 204 of {\em Mathematical Surveys and
  Monographs}.
\newblock American Mathematical Society, Providence, RI, 2015.

\bibitem{cai2015products}
L.~Cai.
\newblock On products in a real moment-angle manifold.
\newblock {\em J. Math. Soc. Japan}, 69(2):503--528, 2017.

\bibitem{fan2016cohomology}
F.~Fan and X.~Wang.
\newblock On the cohomology of moment-angle complexes associated to gorenstein*
  complexes.
\newblock {\em arXiv:1508.00159}, pages 1--49.

\bibitem{alma991027115349703276}
R.~P. Stanley.
\newblock {\em Combinatorics and commutative algebra}, volume~41 of {\em
  Progress in Mathematics}.
\newblock Birkh\"{a}user Boston, Inc., Boston, MA, second edition, 1996.

\bibitem{Vidaurre}
E.~Vidaurre.
\newblock On polyhedral product spaces over polyhedral joins.
\newblock {\em Homology Homotopy Appl.}, 20(2):259--280, 2018.

\end{thebibliography}

\end{document}